\providecommand{\keywords}[1]{\textbf{{Keywords:}} #1}
\newcommand{\qedwhite}{\hfill \ensuremath{\Box}}
\numberwithin{equation}{section}
\newtheorem{theorem}{Theorem}[section]{\bfseries}{\itshape}
\newtheorem*{proof}{Proof}
\newtheorem{example}{Example}[section]{\bfseries}{\itshape}
\newcommand*\diff{\mathop{}\!\mathrm{d}}
\newcommand{\Int}{\int\limits}
\author{Mansur I. Ismailov{\footnotesize *$^,$**}, Sait Erkovan{\footnotesize *} \\
{\footnotesize * Gebze Technical University, Department of Mathematics, 41400, Gebze/Kocaeli, Turkey,} \\
{\footnotesize ** Institute of Mathematics and Mechanics, National Academy of Sciences of Azerbaijan, AZ1141, Baku, Azerbaijan,}\\
\footnotesize  mismailov@gtu.edu.tr, serkovan@gtu.edu.tr}
\title{\textbf{Inverse Problem of Finding the Coefficient of the Lowest Term in Two-dimensional Heat Equation with Ionkin-type Boundary Condition}}
\date{}
\begin{document}

\maketitle
\begin{abstract}
We consider an inverse problem of determining the time-dependent lowest order coefficient of two-dimensional (2D) heat equation with Ionkin boundary and total energy integral overdetermination condition. The well-posedness of the problem is obtained by generalized Fourier method combined by the Banach fixed poind theorem. For obtaining a numerical solution of the inverse problem, we propose the discretization method from a new combination. On the one hand, it is known the traditional method of uniform finite difference combined with numerical integration on a uniform grid (trapezoidal and Simpson's), on the other hand, we give the method of non-uniform finite difference is combined by a numerical integration on a non-uniform grid (with Gauss-Lobatto nodes). Numerical examples illustrate how to implement the method.

~~\\
\keywords{2D heat equation; Ionkin-type boundary condition; Generalized Fourier method; Uniform finite diference method; Non-uniform finite difference method; Numerical integration}

~~\\
{\textbf{MSC 2010:} Primary 35R30, Secondary 35K20, 65D30, 65M06}
\end{abstract}

\section{Introduction and Problem Formulation}
Many researchers have been involved in the inverse coefficient problem for heat equation since this topic is of importance in some engineering texts and many industrial applications. The 1D problems have been well investigated. However, there are so many works which are needed to investigate for the multidimentional problems (see the books \cite{a2.,a1.,a3.} and references therein), although many researchers have reported its difficulties.
   
   Among these inverse problems, much attention is given to the determination of the lowest order coefficient in heat equation, in particular, when this coefficient depends solely on time. 
Various methods for finding the lowest order coefficient in a more general multidimensional parabolic equation have been addressed in numerous works, see \cite{BK15,CL882,CL88,CLX94,Kam15} for time dependent coefficient, \cite{Cho94,Kam13,Kos15,Koz03,Koz06,Koz02,PS87} for space dependent coefficient, \cite{Koz04,PK93,Pya04} for both time and space dependent coefficient. The boundary conditions are most frequently classical (Dirichlet, Neumann and Robin) and additional condition is most frequently specified as the solution at an interior point or an integral mean over the entire domain. 
    
    In this paper, we consider the problem of determining the lowest coefficient that depends on time only,  for a two-dimensional parabolic equation with Ionkin type nonlocal boundary condition and the total energy measurement. This type of nonlocal boundary conditions for partial differential equations were considered by numerous authors starting from the classical work \cite{BS69}.  Nonlocal problems have the specific feature that the corresponding spatial differential operator is nonself-adjoint and hence the system of eigenfunctions is not complete and must be supplemented by associated functions. The existence and uniqueness of the solution of such an inverse problem and well-posedness of this problem are examined by using the method of series expansion in terms of eigenfunctions and associated functions of mentioned spatial differential operator.
    
Numerical methods for solving the problem of the identification of the lowest coefficient of 1D parabolic equations are considered in many works. The works \cite{[1],[4],[2],[3]} dealing with the numerical solution of inverse problems for parabolic equations for an application point of view. We highlight separately studies dealing with numerical solving inverse problems of finding time-dependent lowest coefficient for multidimensional parabolic equations \cite{DS05,Deh00,3.} in view of the practical use. 
Discretization in our problem is performed using uniform and non-uniform finite difference method combined with suitable nodes for numerical integration on uniform and non-uniform grids. 

We study the following problem for two-dimensional heat equation with Ionkin-type boundary condition:\\
Let $D_{xy}=\{(x,y):0<x,y<1\}$. In the domain $D=D_{xy}\times(0<t\leq T)$, we consider
\begin{equation}\label{1}
\frac{\partial u}{\partial t}=\frac{\partial^2 u}{\partial x^2}+\frac{\partial^2 u}{\partial y^2}-p(t)u(x,y,t)+f(x,y,t)
\end{equation}
with the initial condition
\begin{equation}\label{2}
u(x,y,0)=\varphi(x,y),\quad 0\leq x,y\leq 1,
\end{equation}
nonlocal boundary conditions
\begin{equation}\label{3}
\begin{split}
u(0,y,t)=u(1,y,t),\quad u_x(1,y,t)=0,\quad 0\leq y\leq 1, \quad 0\leq t\leq T, \\
u(x,0,t)=u(x,1,t)=0, \quad 0\leq x\leq 1,\quad 0\leq t\leq T, \qquad\qquad\quad
\end{split}
\end{equation}
and overdetermination condition
\begin{equation}\label{4}
\Int_0^1\Int_0^1 u(x,y,t)\diff x\diff y=E(t).
\end{equation}
The problem of finding a pair $\{p(t),u(x,y,t)\}\in C[0,T]\times C^{2,2,1}(\overline{D})$ in \eqref{1}-\eqref{4} will be called an inverse problem.

The paper is organized as follows. In Section 2, we recall some necessary results on basisness of root functions concerning to two-dimensional spectral problem with Ionkin-type boundary condition, on finite difference discretization on uniform and non-uniform grids and on some product rules of numerical integration. The well-posedness of inverse problem \eqref{1}-\eqref{4} for small $T$ is showed by using generalized Fourier method combined with Banach fixed point theorem in Section 3. The numerical methods for solving the inverse problem \eqref{1}-\eqref{4} by applying uniform and non-uniform finite difference with suitable numerical integration rules are given in Section 4. We also present several numerical examples intended to illustrate the behaviour of the proposed methods. The tests were performed using MATLAB are discussed in Section 4. Finally, the concluding remark with the comparasion of two different numerical methods on uniform and non-uniform grids are presented in Section 5.

\section{Preliminaries}
In this section, we recall some necessary results on basisness of root functions concerning to two-dimensional spectral problem with Ionkin-type boundary conditions (see \cite{IM00}), on finite difference discretization on uniform and non-uniform grids (see \cite{BS05,GIM01}) and on some product rules of numerical integration which are used for solving the problem numerically.

\subsection{Spectral problem}
Consider the following spectral problem:
\begin{equation} \label{7}
\frac{\partial^2 Z}{\partial x^2}+\frac{\partial^2 Z}{\partial y^2}+\mu Z=0, \quad 0<x,y<1,
\end{equation}
\begin{equation} \label{8}
Z(0,y)=Z(1,y),\quad \frac{\partial Z(1,y)}{\partial x}=0,\quad Z(x,0)=Z(x,1)=0, \quad 0\leq x,y\leq1,
\end{equation}
where $\mu$ is the separation parameter. We present the solution in the form
\begin{equation}\label{10}
Z(x,y)=X(x)V(y).
\end{equation}
Substituting this expression into \eqref{7} and \eqref{8}, we obtain the following problems
\begin{equation}\label{11}
V{''}(y)+\lambda V(y)=0,\quad 0<y<1, \quad V(0)=V(1)=0,
\end{equation}
\begin{equation}\label{12}
X''(x)+\gamma X(x)=0,\quad 0<x<1, \quad X(0)=X(1),\quad X'(1)=0,
\end{equation}
where $\gamma=\mu-\lambda$. It is known the solutions of problem \eqref{11} have the form
$$\lambda_k=(\pi k)^2,\quad V_k(y)=\sqrt{2}\sin(\pi ky),\quad k=1,2,\ldots.$$
Here and in the following, we give the constants multiplying the eigenfunctions and associated functions from normalization conditions.

The eigenvalues and the corresponding eigenfunctions of problem \eqref{12} were mentioned in \cite{6}:
$$\gamma_m=(2\pi m)^2,\quad m=0,1,2,\ldots,\quad X_0=2,\quad X_m(x)=4\cos(2\pi mx),\quad m=1,2,\ldots.$$
Consequently, the eigenvalues and eigenfunctions of problem \eqref{7}, \eqref{8} with the representation \eqref{10} have the form
$$\mu_{m,k}=\gamma_m+\lambda_k=(2\pi m)^2+(\pi k)^2,\quad Z_{m,k}(x,y)=X_m(x)V_k(y), \quad m=0,1,2,\ldots, \quad k=1,2,\ldots.$$
Here, the set of eigenfunctions $Z_{m,k}(x,y)$ is not complete in the space $L_2(D_{xy})$; therefore, it is supplemented the set of eigenfunctions by the following associated functions $\overset{\sim}{Z}_{m,k}(x,y)$, $m,k=1,2\ldots$ in \cite{IM00} by following the lines of \cite{3}
$$\overset{\sim}{Z}_{m,k}=4(1-x)\sin(2\pi mx)\sqrt{2}\sin(\pi ky).$$
We denote the system of root functions of problem \eqref{7}, \eqref{8} as follows \cite{IM00}:
\begin{equation}\label{14}
Z_{0,k}(x,y)=X_0(x)V_k(y), \quad Z_{2m-1,k}(x,y)=X_m(x)V_k(y),\quad Z_{2m,k}(x,y)=\overset{\sim}{Z}_{m,k},\quad k,m=1,2,\ldots
\end{equation}
The adjoint problem of \eqref{7}, \eqref{8} is 
\begin{equation} \label{9}
\begin{split}
\partial^2 W/\partial x^2+\partial^2 W/\partial y^2+\mu W=0, \quad 0<x,y<1, \\
W(0,y)=0,\quad \partial W(0,y)/\partial x=\partial W(1,y)/\partial x, \quad W(x,0)=W(x,1)=0, \quad 0\leq x,y\leq 1.
\end{split}
\end{equation}
The root functions of the adjoint problem \eqref{9} have the form 
\begin{equation}\label{15}
\begin{split}
W_{0,k}(x,y)=xV_k(y),\quad W_{2m,k}(x,y)=\sin(2\pi mx)V_k(y),\\
 W_{2m-1,k}(x,y)=x\cos(2\pi mx)V_k(y),\quad m,k=1,2,\ldots.
\end{split}
\end{equation}
Here, the sequences \eqref{14}, \eqref{15} form a biorthonormal system of functions on $\overline{D}_{xy}$; i.e., for any admissible indices $m,k,l,$ and $p$, we have $(Z_{m,k},W_{lp})=1$ if $m=l$ and $k=p$ and $(Z_{m,k},W_{l,p})=0$ otherwise. And each of these sequence is a basis in the space $L_2(D_{xy})$ \cite{IM00}. Here, the inner product is given by
$$(\psi,\xi)=\Int_0^1\Int_0^1\psi(x,y)\xi(x,y)\diff x\diff y.$$ 

\subsection{Finite difference discretization}
We represent the derivatives in a differential equation with finite difference discretization both on a uniform and on a non-uniform grid in order to solve the differential equation numerically. 

We construct an explicit finite difference scheme for this problem. First, let us show this scheme on a uniform grid.
\subsubsection{Uniform grid}
Let us consider a uniform grid with increments $hx$ and $hy$ with respect to $x$ and $y$ respectively and time increment $ht$. Set
\begin{equation}
\begin{split}
&x_i=i\cdot hx, \quad i=0,1,\ldots,Nx, \quad hx\cdot Nx=1, \\
&y_j=j\cdot hy, \quad j=0,1,\ldots,Ny, \quad hy\cdot Ny=1, \\
&t_n=n\cdot ht, \quad n=0,1,\ldots,Nt, \quad ht\cdot Nt=T.
\end{split}
\nonumber
\end{equation}
Let $u_{ij}^n=u(x_i,y_j,t_n)$, then from \cite{GIM01} we take the derivatives as follows:
\begin{equation}
\begin{split}
\left.\frac{\partial u}{\partial t}\right|_{(x_i,y_j,t_n)}&=\frac{u_{ij}^{n+1}-u_{ij}^n}{ht}, \\ 
\left.\frac{\partial^2 u}{\partial x^2}\right|_{(x_i,y_j,t_n)}=\frac{u_{i+1j}^{n}-2u_{ij}^n+u_{i-1j}^n}{hx^2},& \qquad \left.\frac{\partial^2 u}{\partial y^2}\right|_{(x_i,y_j,t_n)}=\frac{u_{ij+1}^{n}-2u_{ij}^n+u_{ij-1}^n}{hy^2}.
\end{split}
\nonumber
\end{equation}
Now let's see the explicit finite difference scheme on a non-uniform grid.

\subsubsection{Non-uniform grid}
When we consider a non-uniform grid, even if we don't have fixed increments for the spatial axes like on a uniform grid but we have fixed increment $ht$ for time. Set 
\begin{equation}
\begin{split}
&x_i, \quad i=0,1,\ldots,Nx, \\
&y_j, \quad y=0,1,\ldots,Ny, \\
t_n=n\cdot ht, &\quad n=0,1,\ldots,Nt,\quad ht\cdot Nt=T.
\end{split}
\nonumber
\end{equation}
We take the first derivatives as
$$\left.\frac{\partial u}{\partial x}\right|_{(x_i,y_j,t_n)}=\frac{u_{ij}^{n}-u_{i-1j}^n}{x_i-x_{i-1}},\mbox{ for }i=1,\ldots,Nx,\quad \left.\frac{\partial u}{\partial t}\right|_{(x_i,y_j,t_n)}=\frac{u_{ij}^{n+1}-u_{ij}^n}{ht},$$
and we take the second order derivatives on the spatial axes as follows \cite{BS05}
\begin{equation}
\begin{split}
&\left.\frac{\partial^2 u}{\partial x^2}\right|_{(x_i,y_j,t_n)}=\frac{2u_{i-1j}^n}{(x_{i-1}-x_{i})(x_{i-1}-x_{i+1})}+\frac{2u_{ij}^n}{(x_{i}-x_{i-1})(x_{i}-x_{i+1})}+\frac{2u_{i+1j}^n}{(x_{i+1}-x_{i-1})(x_{i+1}-x_{i})}, \\
&\left.\frac{\partial^2 u}{\partial y^2}\right|_{(x_i,y_j,t_n)}=\frac{2u_{ij-1}^n}{(y_{j-1}-y_{j})(y_{j-1}-y_{j+1})}+\frac{2u_{ij}^n}{(y_{j}-y_{j-1})(y_{j}-y_{j+1})}+\frac{2u_{ij+1}^n}{(y_{j+1}-y_{j-1})(y_{j+1}-y_{j})}.
\end{split}
\nonumber
\end{equation}

\subsection{Numerical Integration}
We consider several numerical integration formulas both for uniform and non-uniform grids.
\subsubsection{On uniform grid}
Let 
\begin{equation}
\begin{split}
I&=\iint_{D_{xy}}u(x,y)\diff x\diff y,\\
x_i=ih,&~y_j=jk,\quad h=1/n,~k=1/m.
\end{split}
\nonumber
\end{equation}
\textit{The product trapezoidal rule} is
\begin{equation}
I\approx hk\sum_{i=1}^n\sum_{j=1}^m \frac{1}{4}(u(x_{i-1},y_{j-1})+u(x_{i-1},y_{j})+u(x_{i},y_{j-1})+u(x_{i},y_{j})).
\nonumber
\end{equation}
\textit{The product Simpson's rule} is
\begin{equation}
\begin{split}
I \approx \frac{hk}{9}\sum_{i=1}^{n/2}\sum_{j=1}^{m/2} [&u(x_{2i-2},y_{2j-2})+4u(x_{2i-1},y_{2j-2})+u(x_{2i},y_{2j-2})] \\
& +4[u(x_{2i-2},y_{2j-1})+4u(x_{2i-1},y_{2j-1})+u(x_{2i},y_{2j-1})] \\
& ~\\
&+ [u(x_{2i-2},y_{2j})+4u(x_{2i-1},y_{2j})+u(x_{2i},y_{2j})],
\end{split}
\nonumber
\end{equation}
where $m$ and $n$ must be even.

\subsubsection{On non-uniform grid}
We have product rules obtained from Gauss-Lobatto nodes and weights on the unit square. We get desired nodes by moving these nodes to $\overline{D}_{xy}$ from the unit square. Hence, if the increments of the nodes are not equal, then we can use the following product rule for numerical integration.
\begin{equation}
I\approx \frac{1}{4}\sum_{i=1}^n\sum_{j=1}^m A_{ij}u\left(\frac{1}{2}x_{i}+\frac{1}{2},\frac{1}{2}y_{j}+\frac{1}{2}\right),
\nonumber
\end{equation}
where $x_i$ and $y_j$ are nodes of Gauss-Lobatto nodes on unit square and $A_i$ and $A_j$ are corresponding weights to these nodes, respectively with $A_{ij}=A_iA_j$ \cite{AS65}.

\section{Well-posedness of inverse problem}
We have the following assumptions on $\varphi(x,y)$, $f(x,y,t)$ and $E(t)$.

\begin{theorem}[Existence and uniqueness] Under the conditions \label{teoexun}
\begin{equation}
\begin{split}
(A_1)_1 \quad & \varphi(x,y)\in C^{2,2}(\overline{D}_{xy}), \\ 
(A_1)_2 \quad & \varphi(0,y)=\varphi(1,y),\quad \varphi(x,0)=\varphi(x,1)=0, \quad \varphi_x(1,y)=0, \\ 
(A_1)_3 \quad & \varphi_{0,2k-1}\leq 0, \quad \varphi_{2m,2k-1}\leq 0, \quad m,k=1,2\ldots, \\ 
\nonumber
\end{split}
\end{equation}
\begin{equation}
\begin{split}
(A_2)_1 \quad & f(x,y,t)\in C(\overline{D}),~~f(x,y,t)\in C^{2,2}(\overline{D}_{xy}), ~~ \forall t\in [0,T] \\ 
(A_2)_2 \quad & f(0,y)=f(1,y),\quad f(x,0)=f(x,1)=0, \\ 
(A_2)_3 \quad & f_{2m,2k-1}(t)\geq 0, \quad \underset{0\leq t\leq T}{\min}f_{2m,2k-1}(t)\geq \underset{0\leq t\leq T}{\max}f_{2m,2k-1}(t)\left[1-e^{-(\pi(2k-1))^2T-(2\pi m)^2T}\right],\\
& \qquad\qquad\qquad\qquad\qquad\qquad\qquad\qquad\qquad\qquad\qquad\qquad\qquad\qquad~m=0,1,\ldots,~k=1,2,\ldots, \\ 
\nonumber
\end{split}
\end{equation}
\begin{equation}
\begin{split}
(A_3)_1 \quad & E(t)\in C^1[0,T], \\ 
(A_3)_2 \quad & E(0)=\Int_0^1\Int_0^1\varphi(x,y)\diff x\diff y, \\ 
(A_3)_3 \quad & E(t)>0, \quad E'(t)\leq 0, \quad \forall t\in [0,T], \\ 
\nonumber
\end{split}
\end{equation}
the inverse problem \eqref{1}-\eqref{4} has a unique solution for small $T$.
\end{theorem}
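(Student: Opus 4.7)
The plan is to reduce the inverse problem \eqref{1}--\eqref{4} to a fixed-point equation for $p$ alone, and then to apply the Banach contraction principle on $C[0,T]$ for $T$ sufficiently small.

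\textbf{Step 1 (Fourier expansion in the root-function basis).} I would seek the solution as
\begin{equation*}
u(x,y,t)=\sum_{k\ge 1} u_{0,k}(t)Z_{0,k}(x,y)+\sum_{m,k\ge 1}\bigl[u_{2m-1,k}(t)Z_{2m-1,k}(x,y)+u_{2m,k}(t)Z_{2m,k}(x,y)\bigr],
\end{equation*}
using the biorthonormal system \eqref{14}--\eqref{15}. A direct calculation shows $\Delta Z_{2m-1,k}=-\mu_{m,k}Z_{2m-1,k}$, $\Delta Z_{0,k}=-\lambda_k Z_{0,k}$, but for the associated functions $\Delta Z_{2m,k}=-\mu_{m,k}Z_{2m,k}-4\pi m\,Z_{2m-1,k}$. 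Taking the inner product of \eqref{1} against each adjoint root function $W_{m,k}$ yields a triangular system of linear first-order ODEs with initial data $\varphi_{m,k}=(\varphi,W_{m,k})$ and forcing $f_{m,k}(t)=(f(\cdot,\cdot,t),W_{m,k})$; the coefficient $p(t)$ enters linearly as $-p(t)u_{m,k}$. Variation of parameters gives each $u_{m,k}(t)$ explicitly in terms of $\varphi_{m,k}$, $f_{m,k}$ and the exponential factor $\exp\bigl(-\int_0^t p(s)\,ds\bigr)$.

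\textbf{Step 2 (Extracting $p$ from the overdetermination).} Integrating \eqref{4} termwise, the key observation is that $\iint Z_{2m-1,k}\,dx\,dy=0$ because $\int_0^1\cos(2\pi mx)\,dx=0$, and $\iint Z_{0,k}$ and $\iint Z_{2m,k}$ both contain the factor $(1-(-1)^k)$, hence vanish for even $k$. Consequently only the coefficients $u_{0,2j-1}(t)$ and $u_{2m,2j-1}(t)$ enter $E(t)$, and these are decoupled from the $u_{2m-1,k}$'s in the ODE system of Step~1. Differentiating $E(t)$, substituting the corresponding ODEs and collecting the terms proportional to $p(t)$, one obtains $p(t)E(t)=-E'(t)+F(t)-Q[u^{p}](t)$, where $F(t)$ is an explicit series in the Fourier coefficients of $f$ and $Q[u^{p}](t)=\sum\bigl(\text{const}\bigr)\mu_{m,k}u_{m,k}(t)$. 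Since $E(t)>0$ by $(A_3)_3$, this gives the fixed-point equation
\begin{equation*}
p(t)=\Phi[p](t):=\frac{1}{E(t)}\bigl\{-E'(t)+F(t)-Q[u^{p}](t)\bigr\},\qquad t\in[0,T].
\end{equation*}

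\textbf{Step 3 (Banach fixed point).} I would verify that $\Phi$ maps a suitable closed ball $B_R\subset C[0,T]$ into itself: the compatibility conditions $(A_1)_2,(A_2)_2$ permit two integrations by parts in $x$ and $y$ in $\varphi_{m,k}$ and $f_{m,k}(t)$, yielding decay $|u_{m,k}^{p}(t)|=O(1/(m^2 k^2))$ uniformly on $B_R$, so $Q[u^{p}]$ converges absolutely and is bounded. The sign hypotheses $(A_1)_3,(A_2)_3,(A_3)_3$ are used precisely to control the sign of $E(t)$ and $Q[u^{p}]$ so that the range of $\Phi$ stays in $B_R$. To obtain contractivity, subtract the representations for $\Phi[p_1]$ and $\Phi[p_2]$: the explicit formulas from Step~1 yield $\bigl|u_{m,k}^{p_1}(t)-u_{m,k}^{p_2}(t)\bigr|\le C_{m,k}\,T\,\|p_1-p_2\|_{C[0,T]}$ with coefficients $C_{m,k}$ whose weighted sum in $Q$ converges, giving $\|\Phi[p_1]-\Phi[p_2]\|_{C[0,T]}\le CT\,\|p_1-p_2\|_{C[0,T]}$. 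For $T<1/C$ this is a contraction, yielding a unique fixed point $p\in C[0,T]$; the companion $u=u^{p}$ then solves \eqref{1}--\eqref{3}, and \eqref{4} follows by construction.

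\textbf{Main obstacle.} The hard part is controlling $Q[u^{p}]$, since the multiplier $\mu_{m,k}\sim m^2+k^2$ grows while the geometric weights $1/(m\,k)$ only partially compensate. All decay must come from smoothness of $\varphi$ and $f$ together with the boundary compatibilities $(A_1)_2,(A_2)_2$, which alone allow the two integrations by parts needed to obtain $O(1/(m^2 k^2))$ Fourier decay. A related technical point is the presence of the associated-function coupling in $\Delta Z_{2m,k}$, which must not spoil the uniform-in-$p$ estimates for $u^{p}$; fortunately the coupling is one-directional and does not feed back into the components that appear in $Q$, so the estimates go through. Finally, promoting the series solution to a classical element of $C^{2,2,1}(\overline D)$ requires uniform convergence of the once-in-$t$, twice-in-$(x,y)$ differentiated series, which is again granted by the same Fourier-decay bounds.
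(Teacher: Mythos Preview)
Your plan is essentially the paper's own proof: Fourier expansion in the root-function system \eqref{14}--\eqref{15}, explicit variation-of-parameters formulas for the coefficients, differentiation of the overdetermination \eqref{4} to obtain a nonlinear operator equation $p=P(p)$, and then the Banach contraction principle for small $T$. Your observation that only the modes $Z_{0,2k-1}$ and $Z_{2m,2k-1}$ survive the double integral, and that these are decoupled from the $Z_{2m-1,k}$ components, matches the paper exactly.

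The one point where you diverge concerns the domain of the fixed-point map and the role of the sign hypotheses. The paper does \emph{not} work on a ball $B_R\subset C[0,T]$; it works on the closed cone $C^{+}[0,T]=\{p\in C[0,T]:p\ge 0\}$, and the assumptions $(A_1)_3$, $(A_2)_3$, $(A_3)_3$ are used precisely to verify that $P$ maps $C^{+}[0,T]$ into itself (each displayed term in $P(p)$ is shown nonnegative; the quantitative second part of $(A_2)_3$ is what makes the difference $\frac{4\sqrt{2}}{\pi(2k-1)}f_{0,2k-1}(t)-4\sqrt{2}\pi(2k-1)\int_0^t f_{0,2k-1}(\tau)e^{-\ldots}d\tau$ nonnegative). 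Working on $C^{+}$ then buys a clean Lipschitz estimate, since for $p_1,p_2\ge 0$ the mean value theorem gives $\bigl|e^{-\int_0^t p_1}-e^{-\int_0^t p_2}\bigr|\le T\|p_1-p_2\|_{C[0,T]}$ with constant independent of $\|p_i\|$, so no ball is needed. Your ball approach can be made to work, but then the exponentials are only bounded by $e^{RT}$ and the sign conditions become superfluous for invariance of $B_R$; so your sentence ``the sign hypotheses \ldots are used precisely to control the sign of $E(t)$ and $Q[u^p]$ so that the range of $\Phi$ stays in $B_R$'' misidentifies their function. Replace $B_R$ by $C^{+}[0,T]$ and your outline coincides with the paper's argument.
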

\begin{proof}
Since \eqref{14} is a basis in $L_2(\overline{D}_{xy})$, we present the solution of \eqref{1}-\eqref{3} in the following form for arbitrary $p(t)\in C[0,T]$:
\begin{equation}\label{u}
u(x,y,t)=\sum_{k=1}^\infty\left[\alpha_{0,k}(t)Z_{0,k}(x,y)+\sum_{m=1}^\infty \alpha_{2m-1,k}(t)Z_{2m-1,k}(x,y)+\sum_{m=1}^\infty \alpha_{2m,k}(t)Z_{2m,k}(x,y) \right],
\end{equation}
where
\begin{equation}\label{alf}
\begin{split}
&\alpha_{0,k}(t)=\varphi_{0,k}\cdot e^{-(\pi k)^2t-\int_0^tp(s)\diff s}+\int_0^t f_{0,k}(\tau) e^{-(\pi k)^2(t-\tau)-\int_\tau^t p(s)\diff s}\diff\tau, \\
& \alpha_{2m,k}(t)=\varphi_{2m,k} e^{-(2\pi m)^2t-(\pi k)^2t-\int_0^tp(s)\diff s}+\int_0^t f_{2m,k}(\tau) e^{-(2\pi m)^2(t-\tau)-(\pi k)^2(t-\tau)-\int_\tau^t p(s)\diff s}\diff\tau, \\
& \alpha_{2m-1,k}(t)=\left[\varphi_{2m-1,k}-4\pi m\cdot\varphi_{2m,k}t\right]e^{-(2\pi m)^2t-(\pi k)^2t-\int_0^tp(s)\diff s}\\
&\qquad\qquad\qquad\qquad+\int_0^t\left[f_{2m-1,k}(\tau)-4\pi m f_{2m,k}(\tau)(t-\tau)\right]e^{-(2\pi m)^2(t-\tau)-(\pi k)^2(t-\tau)-\int_\tau^tp(s)\diff s}\diff\tau. 
\end{split}
\nonumber
\end{equation} 
Here 
\begin{equation}
\begin{split}
&\varphi_{m,k}=\iint_{D_{xy}} \varphi(x,y)W_{m,k}(x,y)\diff x \diff y,\\
&f_{m,k}(t)=\iint_{D_{xy}}f(x,y,t)W_{m,k}(x,y)\diff x \diff y, \quad m=0,1,2,\ldots,~k=1,2,\ldots.
\end{split}
\nonumber
\end{equation}
Under conditions $A_1$ and $A_2$, the series \eqref{u}, its $t-$partial derivative, the $xx-$second order and $yy-$second order partial derivatives converge uniformly in $\overline{D}$  that their majorizing sums absolutely convergence. Thus $u(x,y,t)\in C^{2,2,1}(\overline{D})$. Since the conditions that the series \eqref{u} can be termwise differentiable by $t$ and $E(t)\in C^1[0,T]$ with $(A_3)_2$ the overdetermination condition \eqref{4} is equivalent to 
\begin{equation} \label{Et}
\Int_0^1\Int_0^1u_t(x,y,t)\diff x\diff y=E'(t).
\end{equation}
Therefore we have 
\begin{equation} \label{p}
P(p(t))=p(t)
\end{equation}
from the equations \eqref{u}-\eqref{Et} such that
\begin{equation}
\begin{split}
P(p(t))=\frac{1}{E(t)}&\left[-E'(t)+\sum_{k=1}^\infty\frac{4\sqrt{2}}{\pi(2k-1)}f_{0,2k-1}(t)-4\sqrt{2}\pi(2k-1)\right.\\
&\qquad\times\left[\varphi_{0,2k-1}e^{-(\pi(2k-1))^2t-\int_0^tp(s)\diff s}+\int_0^tf_{0,2k-1}(\tau)e^{-(\pi(2k-1))^2(t-\tau)-\int_\tau^tp(s)\diff s}\diff\tau\right]\\
&~+\sum_{k=1}^\infty\sum_{m=1}^\infty\frac{4\sqrt{2}}{\pi^2(2k-1)m}f_{2m,2k-1}(t)-\left(\frac{16\sqrt{2}m}{2k-1}+\frac{4\sqrt{2}(2k-1)}{m}\right) \\
&\qquad\qquad\qquad\times\left[\varphi_{2m,2k-1}e^{-(2\pi m)^2t-(\pi(2k-1))^2t-\int_0^tp(s)\diff s}\right.\\
&\qquad\qquad\qquad\qquad\quad~\left.+\int_0^tf_{2m,2k-1}(\tau)e^{-(2\pi m)^2(t-\tau)-(\pi(2k-1))^2(t-\tau)-\int_\tau^tp(s)\diff s}\diff\tau\right].
\end{split}
\nonumber
\end{equation}
Now let us show that $P$ is a contraction mapping in $C^{+}[0,T]$, for small $T$, where
\begin{equation}
C^{+}[0,T]=\{p(t)\in C[0,T]:p(t)\geq 0\}.
\nonumber
\end{equation}
Moreover, it is easy to see that
\begin{equation}
P:C^{+}[0,T]\rightarrow C^{+}[0,T]
\nonumber
\end{equation}
under conditions $(A_1)_3$, $(A_2)_3$ and $(A_3)_3$. For all $p_1(t),p_2(t)\in C^{+}[0,T]$,

\begin{equation}
\begin{split}
|P&(p_1(t))-P(p_2(t))|\\
\leq& \frac{1}{|E(t)|}\sum_{k=1}^\infty 4\sqrt{2}\pi (2k-1)\left[|\varphi_{0,2k-1}|\left|e^{-\int_0^tp_1(s)\diff s}-e^{-\int_0^tp_2(s)\diff s}\right|\right.\\
&\qquad\qquad\qquad\qquad\qquad\qquad\qquad\qquad\qquad\quad~~\left.+\int_0^t\left|f_{0,2k-1}(\tau)\right|\left|e^{-\int_\tau^tp_1(s)\diff s}-e^{-\int_\tau^tp_2(s)\diff s}\right|\diff\tau\right] \\
&+\frac{1}{|E(t)|}\sum_{k=1}^\infty\sum_{m=1}^\infty\left(\frac{16\sqrt{2}m}{2k-1}+\frac{4\sqrt{2}(2k-1)}{m}\right)\\
&\quad~\times\left[|\varphi_{2m,2k-1}|\left|e^{-\int_0^tp_1(s)\diff s}-e^{-\int_0^tp_2(s)\diff s}\right|+\int_0^t\left|f_{2m,2k-1}(\tau)\right|\left|e^{-\int_\tau^tp_1(s)\diff s}-e^{-\int_\tau^tp_2(s)\diff s}\right|\diff\tau\right].
\end{split}
\nonumber
\end{equation}
By using the mean value theorem we get
\begin{equation}
\left|e^{-\int_0^tp_1(s)\diff s}-e^{-\int_0^tp_2(s)\diff s}\right| \leq T\cdot\underset{0\leq t\leq T}{\max}|p_1(t)-p_2(t)|=T\|p_1-p_2\|_{C[0,T]}.
\nonumber
\end{equation}
The last inequality yields to the following existence and uniqueness results:
\begin{equation}
\|P(p_1)-P(p_2)\|_{C[0,T]}\leq \beta \|p_1-p_2\|_{C[0,T]},
\nonumber
\end{equation}
where 
\begin{equation}
\begin{split}
\beta=\frac{T}{\underset{0\leq t\leq T}{\min} |E(t)|}&\left(\sum_{k=1}^\infty 4\sqrt{2}\pi(2k-1)\left| \varphi_{0,2k-1}\right|+\int_0^T\sum_{k=1}^\infty 4\sqrt{2}\pi(2k-1)\left| f_{0,2k-1}(\tau)\right|\diff\tau \right. \\
& \qquad\qquad +\sum_{k=1}^\infty\sum_{m=1}^\infty\left(\frac{16\sqrt{2}m}{2k-1}+\frac{4\sqrt{2}(2k-1)}{m}\right)|\varphi_{2m,2k-1}|\\
& ~~\qquad\qquad\qquad\quad \left.+\int_0^T\sum_{k=1}^\infty\sum_{m=1}^\infty\left(\frac{16\sqrt{2}m}{2k-1}+\frac{4\sqrt{2}(2k-1)}{m}\right)|f_{2m,2k-1}(\tau)|\diff \tau \right).
\end{split}
\nonumber
\end{equation}
In the case $\beta<1$ the map $P$ is contraction map in $C^{+}[0,T]$. Obviously, this inequality is satisfied for small $T$. Hence, $P$ has unique fixed point by Banach fixed point theorem. \qedwhite

\end{proof}

The following result on continuously dependence on the data of the solution of inverse problem \eqref{1}-\eqref{4} holds.

\begin{theorem}[Stability]
Under assumptions $(A_1)-(A_3)$, the solution $(p,u)$ depends continuously upon the data. 
\end{theorem}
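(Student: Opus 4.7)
The plan is to leverage the fixed-point machinery of Theorem~\ref{teoexun}. Let $\{\varphi^{(i)},f^{(i)},E^{(i)}\}$ with $i=1,2$ be two data triples satisfying $(A_1)$--$(A_3)$, and let $(p_i,u_i)$ be the corresponding solutions. Denote by $P_i$ the operator constructed in Theorem~\ref{teoexun} for the $i$-th data. Since $p_i=P_i(p_i)$, the elementary splitting
$$
p_1-p_2=\bigl(P_1(p_1)-P_1(p_2)\bigr)+\bigl(P_1(p_2)-P_2(p_2)\bigr)
$$
reduces the stability statement to two estimates: the first is absorbed into the contraction constant $\beta<1$ already obtained; the second quantifies the linear dependence of $P(p)$ on the input data.

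First I would bound $\|P_1(p_2)-P_2(p_2)\|_{C[0,T]}$. Inspecting the explicit formula for $P$, each term depends linearly on $E'$, on $1/E$, on the coefficients $\varphi_{m,k}$, and on $f_{m,k}(\tau)$, while the exponentials $e^{-\int_\tau^t p_2(s)\diff s}$ are common to both $P_1(p_2)$ and $P_2(p_2)$. Using the elementary bound
$$
\left|\tfrac{1}{E_1(t)}-\tfrac{1}{E_2(t)}\right|\leq\tfrac{\|E_1-E_2\|_{C[0,T]}}{\min_{[0,T]}|E_1|\cdot\min_{[0,T]}|E_2|},
$$
term-by-term comparison yields an estimate of the form
$$
\|P_1(p_2)-P_2(p_2)\|_{C[0,T]}\leq C_1\|\varphi^{(1)}-\varphi^{(2)}\|_{C^{2,2}(\overline{D}_{xy})}+C_2\|f^{(1)}-f^{(2)}\|_{C(\overline{D})\cap C^{2,2}}+C_3\|E_1-E_2\|_{C^1[0,T]}.
$$
Combining this with the already-proved $\|P_1(p_1)-P_1(p_2)\|_{C[0,T]}\leq\beta\|p_1-p_2\|_{C[0,T]}$ and rearranging gives the continuous dependence $\|p_1-p_2\|_{C[0,T]}\leq(1-\beta)^{-1}(\text{data differences})$.

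Second, with $p$ stable, I would turn to $u$. Subtracting the series representations~\eqref{u} for $u_1$ and $u_2$,
$$
u_1-u_2=\sum_{k,m}\bigl(\alpha^{(1)}_{m,k}(t)-\alpha^{(2)}_{m,k}(t)\bigr)Z_{m,k}(x,y),
$$
each difference of Fourier coefficients is linear in $\varphi^{(1)}-\varphi^{(2)}$ and $f^{(1)}-f^{(2)}$ and, through the exponentials $e^{-\int p_i}$, Lipschitz in $p_1-p_2$; the latter has just been controlled. Uniform convergence of the majorizing series, of the same type as in the existence part, then transfers continuous dependence from the coefficients to $u$ in the $C^{2,2,1}(\overline{D})$ norm.

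The main obstacle is the bookkeeping of the series: the coefficients $4\sqrt{2}\pi(2k-1)$ and $16\sqrt{2}m/(2k-1)+4\sqrt{2}(2k-1)/m$ grow in $m,k$, so the stability estimate requires the data differences $\delta\varphi$ and $\delta f$ to have Fourier coefficients, relative to the adjoint basis $\{W_{m,k}\}$ from~\eqref{15}, that decay fast enough for absolute convergence. This is precisely what $(A_1)_{1,2}$ and $(A_2)_{1,2}$ guarantee through integration by parts against $W_{m,k}$, and it is essentially the same decay used implicitly to make $\beta$ finite in Theorem~\ref{teoexun}; the smallness of $T$ that produced $\beta<1$ is therefore sufficient to close the stability argument as well.
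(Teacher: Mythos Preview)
Your proposal is correct and follows essentially the same approach as the paper: both start from the fixed-point equation \eqref{p}, estimate the difference $p_1-p_2$, absorb the $\|p_1-p_2\|$ contribution into a constant less than $1$ on the left-hand side, and then transfer the resulting bound to $u_1-u_2$ via the series \eqref{u}. The only difference is organizational: you invoke the abstract splitting $p_1-p_2=(P_1(p_1)-P_1(p_2))+(P_1(p_2)-P_2(p_2))$ and reuse the contraction constant $\beta$ from Theorem~\ref{teoexun} directly, whereas the paper writes out the term-by-term estimates explicitly and arrives at an analogous constant $M_{20}<1$.
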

\begin{proof}
Let $\Phi=\{\varphi,E,f\}$ and $\overline{\Phi}=\{\overline{\varphi},\overline{E},\overline{f}\}$ be two sets of data, which satisfy the conditions $(A_1)-(A_3)$. Denote $\|\Phi\|=\|\varphi\|_{C^{2,2}(\overline{D}_{xy})}+\|E\|_{C^1[0,T]}+\|f\|_{C^{2,2,0}(\overline{D})}$. Suppose that there exist positive constants $M_1$ and $M_2$ such that
$$0<M_1\leq |E|,\quad 0<M_1\leq |\overline{E}|,\qquad \|\Phi\|\leq M_2, \quad \|\overline{\Phi}\|\leq M_2.$$
Let $(p,u)$ and $(\overline{p},\overline{u})$ be the solutions of inverse problem \eqref{1}-\eqref{4} corresponding the data $\Phi$ and $\overline{\Phi}$, respectively. According to \eqref{p}
\begin{equation}
\begin{split}
p(t)=\frac{1}{E(t)}&\left[-E'(t)+\sum_{k=1}^\infty\frac{4\sqrt{2}}{\pi(2k-1)}f_{0,2k-1}(t)-4\sqrt{2}\pi(2k-1)\right.\\
&\qquad~~\times\left[\varphi_{0,2k-1}e^{-(\pi(2k-1))^2t-\int_0^tp(s)\diff s}+\int_0^tf_{0,2k-1}(\tau)e^{-(\pi(2k-1))^2(t-\tau)-\int_\tau^tp(s)\diff s}\diff\tau\right]\\
&~+\sum_{k=1}^\infty\sum_{m=1}^\infty\frac{4\sqrt{2}}{\pi^2(2k-1)m}f_{2m,2k-1}(t)-\left(\frac{16\sqrt{2}m}{2k-1}+\frac{4\sqrt{2}(2k-1)}{m}\right) \\
&\qquad\qquad\qquad\times\left[\varphi_{2m,2k-1}e^{-(2\pi m)^2t-(\pi(2k-1))^2t-\int_0^tp(s)\diff s}\right.\\
&\qquad\qquad\qquad\qquad\qquad~~\left.+\int_0^tf_{2m,2k-1}(\tau)e^{-(2\pi m)^2(t-\tau)-(\pi(2k-1))^2(t-\tau)-\int_\tau^tp(s)\diff s}\diff\tau\right],
\end{split}
\nonumber
\end{equation}
\begin{equation}
\begin{split}
\overline{p}(t)=\frac{1}{\overline{E}(t)}&\left[-\overline{E}'(t)+\sum_{k=1}^\infty\frac{4\sqrt{2}}{\pi(2k-1)}\overline{f}_{0,2k-1}(t)-4\sqrt{2}\pi(2k-1)\right.\\
&\qquad~~\times\left[\overline{\varphi}_{0,2k-1}e^{-(\pi(2k-1))^2t-\int_0^t\overline{p}(s)\diff s}+\int_0^t\overline{f}_{0,2k-1}(\tau)e^{-(\pi(2k-1))^2(t-\tau)-\int_\tau^t\overline{p}(s)\diff s}\diff\tau\right]\\
&~+\sum_{k=1}^\infty\sum_{m=1}^\infty\frac{4\sqrt{2}}{\pi^2(2k-1)m}\overline{f}_{2m,2k-1}(t)-\left(\frac{16\sqrt{2}m}{2k-1}+\frac{4\sqrt{2}(2k-1)}{m}\right) \\
&\qquad\qquad\qquad\times\left[\overline{\varphi}_{2m,2k-1}e^{-(2\pi m)^2t-(\pi(2k-1))^2t-\int_0^t\overline{p}(s)\diff s}\right.\\
&\qquad\qquad\qquad\qquad\qquad~~\left.+\int_0^t\overline{f}_{2m,2k-1}(\tau)e^{-(2\pi m)^2(t-\tau)-(\pi(2k-1))^2(t-\tau)-\int_\tau^t\overline{p}(s)\diff s}\diff\tau\right].
\end{split}
\nonumber
\end{equation}
Let us estimate the difference $p-\overline{p}$ at first. It is easy to compute the followings
\begin{equation}
\left|\frac{E'(t)}{E(t)}-\frac{\overline{E}'(t)}{\overline{E(t)}}\right|\leq M_3\|E-\overline{E}\|_{C^1[0,T]},
\nonumber
\end{equation}
\begin{equation}
\begin{split}
\left|\sum_{k=1}^\infty \frac{1}{2k-1}\left(\frac{f_{0,2k-1}(t)}{E(t)}-\frac{\overline{f}_{0,2k-1}(t)}{\overline{E}(t)}\right)\right|\leq M_4\|f-\overline{f}\|_{C^{2,2,0}(\overline{D})}+M_5\|E-\overline{E}\|_{C^1[0,T]},
\end{split}
\nonumber
\end{equation}
\begin{equation}
\begin{split}
&\left|\sum_{k=1}^\infty (2k-1)\left(\frac{1}{E(t)}\varphi_{0,2k-1}e^{-(\pi(2k-1))^2t-\int_{0}^{t}p(s)\diff s}-\frac{1}{\overline{E}(t)}\overline{\varphi}_{0,2k-1}e^{-(\pi(2k-1))^2t\int_0^t \overline{p}(s)\diff s}\right)\right|\\
&\qquad\qquad\qquad\qquad\qquad\qquad\leq M_6\|\varphi-\overline{\varphi}\|_{C^{2,2}(\overline{D}_{xy})}+M_7T\|p-\overline{p}\|_{C[0,T]}+M_8\|E-\overline{E}\|_{C^1[0,T]},
\end{split}
\nonumber
\end{equation}
\begin{equation}
\begin{split}
&\left|\sum_{k=1}^\infty (2k-1)\left(\frac{1}{E(t)}\int_0^t f_{0,2k-1}(\tau)e^{-(\pi(2k-1))^2(t-\tau)-\int_\tau^t p(s)\diff s}\diff\tau \right.\right. \\
&\qquad\qquad\qquad\qquad\qquad\qquad\qquad\qquad\left.\left.-\frac{1}{\overline{E}(t)}\int_0^t \overline{f}_{0,2k-1}(\tau)e^{-(\pi(2k-1))^2(t-\tau)-\int_\tau^t\overline{p}(s)\diff s}\diff\tau \right)\right|\\
&\qquad\qquad\qquad\qquad\qquad\leq M_9 T \|f-\overline{f}\|_{C^{2,2,0}(\overline{D})}+ M_{10}T^2\|p-\overline{p}\|_{C[0,T]}+M_{11} T \|E-\overline{E}\|_{C^1[0,T]},
\end{split}
\nonumber
\end{equation}
\begin{equation}
\left|\sum_{k=1}^\infty \sum_{m=1}^\infty \frac{1}{(2k-1)m}\left(\frac{f_{2m,2k-1}(t)}{E(t)}-\frac{\overline{f}_{2m,2k-1}(t)}{\overline{E}(t)}\right)\right| \leq M_{12} \|f-\overline{f}\|_{C^{2,2,0}(\overline{D})}+ M_{13} \|E-\overline{E}\|_{C^1[0,T]},
\nonumber
\end{equation}
\begin{equation}
\begin{split}
&\left|\sum_{k=1}^\infty \sum_{m=1}^\infty \left(\frac{4m}{2k-1}+\frac{2k-1}{m}\right)\left(\frac{\varphi_{2m,2k-1}}{E(t)}e^{-(2\pi m)^2t-(\pi(2k-1))^2t-\int_0^t p(s)\diff s}\right.\right.\\
&\qquad\qquad\qquad\qquad\qquad\qquad\qquad\qquad\left.\left.-\frac{\overline{\varphi}_{2m,2k-1}}{\overline{E}(t)}e^{-(2\pi m)^2t-(\pi(2k-1))^2t-\int_0^t \overline{p}(s)\diff s}\right)\right| \\
& \qquad\qquad\qquad\qquad\qquad\qquad\qquad\leq M_{14}\|\varphi-\overline{\varphi}\|_{C^{2,2}(\overline{D}_{xy})}+M_{15}T\|p-\overline{p}\|_{C[0,T]}+M_{16}\|E-\overline{E}\|_{C^1[0,T]},
\end{split}
\nonumber
\end{equation}
\begin{equation}
\begin{split}
&\left|\sum_{k=1}^\infty \sum_{m=1}^\infty \left(\frac{4m}{2k-1}+\frac{2k-1}{m}\right)\left(\frac{1}{E(t)}\int_0^t f_{2m,2k-1}(\tau) e^{-(2\pi m)^2(t-\tau)-(\pi(2k-1))^2(t-\tau)-\int_\tau^t p(s)\diff s}\diff\tau \right.\right. \\
&\qquad\qquad\qquad\qquad\qquad\qquad\left.\left.+\frac{1}{\overline{E}(t)}\int_0^t \overline{f}_{2m,2k-1}(\tau) e^{-(2\pi m)^2(t-\tau)-(\pi(2k-1))^2(t-\tau)-\int_\tau^t \overline{p}(s)\diff s}\diff\tau\right)\right| \\
&\qquad\qquad\qquad\quad\qquad\qquad\leq M_{17}T\|f-\overline{f}\|_{C^{2,2,0}(\overline{D})}+M_{18}T^2\|p-\overline{p}\|_{C[0,T]}+M_{19}T\|E-\overline{E}\|_{C^1[0,T]},
\end{split}
\nonumber
\end{equation}
where $M_i$, $i=3,4,\ldots,19$ are constants that are determined from $M_1$ and $M_2$.
Then we get
$$(1-M_{20})\|p-\overline{p}\|_{C[0,T]}\leq M_{21}\left(\|\varphi-\overline{\varphi}\|_{C^{2,2}(\overline{D}_{xy})}+\|E-\overline{E}\|_{C^1[0,T]}+\|f-\overline{f}\|_{C^{2,2,0}(\overline{D})}\right),$$
where $M_{20}=4\sqrt{2}\pi T(M_7+TM_{10})+4\sqrt{2}T(M_{15}+TM_{18})$ and $M_{21}=\max\{4\sqrt{2}/\pi M_4+4\sqrt{2}\pi TM_9+4\sqrt{2}/\pi^2M_{12}+4\sqrt{2}TM_{17},M_3+4\sqrt{2}/\pi M_5+4\sqrt{2}\pi(M_8+TM_{11})+4\sqrt{2}/\pi^2M_{13}+4\sqrt{2}(M_{16}+TM_{19}),$ $4\sqrt{2}\pi M_6+4\sqrt{2}M_{14}\}$. The inequality $M_{20}< 1$ holds for small $T$. Finally, we have
\begin{equation}
\|p-\overline{p}\|_{C[0,T]}\leq M_{22}\|\Phi-\overline{\Phi}\|,\qquad M_{22}=\frac{M_{21}}{1-M_{20}}.
\nonumber
\end{equation}
Similarly, we obtain the estimate the difference $u-\overline{u}$ from \eqref{u}:
\begin{equation}
\|u-\overline{u}\|_{C(\overline{D})}\leq M_{23} \|\Phi-\overline{\Phi}\|.
\nonumber
\end{equation} 
\qedwhite
\end{proof}

Now, let's see how we implement the numerical solution of the inverse problem in two different methods.

\section{Numerical methods for inverse problem}
We will consider the examples of numerical solution of the inverse problem \eqref{1}-\eqref{4}. For the convenience of discussion of the numerical method, we will rewrite the equations as follows:
\begin{equation} \label{v1}
\partial v/\partial t=\partial^2v/\partial x^2+\partial^2v/\partial y^2+r(t)f(x,y,t),
\end{equation}
\begin{equation} \label{v2}
v(x,y,0)=\varphi(x,y),\quad 0\leq x,y\leq 1,
\end{equation}
\begin{equation} \label{v3}
\begin{split}
&v(0,y,t)=v(1,y,t),\quad v_x(1,y,t)=0, \quad 0\leq y\leq1,~ 0\leq t\leq T,\\
&v(x,0,t)=v(x,1,t)=0,\quad 0\leq x\leq1,\quad 0\leq t\leq T,
\end{split}
\end{equation}
\begin{equation} \label{v4}
\Int_0^1\Int_0^1 v(x,y,t)\diff x\diff y=E(t)r(t),
\end{equation}
by using the transformations 
\begin{equation}
r(t)=e^{\int_0^t p(s)\diff s},
\nonumber 
\end{equation}
\begin{equation}
v(x,y,t)=u(x,y,t)r(t).
\nonumber
\end{equation}

\subsection{Uniform finite difference method}
We consider the following explicit finite difference discretization of the problem \eqref{v1}-\eqref{v4} on a uniform grid.
\begin{equation}
\begin{split}
\frac{v_{ij}^{n+1}-v_{ij}^{n}}{ht}=\frac{v_{i+1j}^n-2v_{ij}^n+v_{i-1j}^n}{hx^2}+\frac{v_{ij+1}^n-2v_{ij}^n+v_{ij-1}^n}{hy^2}&+r^nf_{ij}^n, \\
\Rightarrow v_{ij}^{n+1}=\frac{ht}{hx^2}v_{i+1j}^n+\frac{ht}{hy^2}v_{ij+1}^n+\left(1-2\frac{ht}{hx^2}-2\frac{ht}{hy^2} \right)v_{ij}^n+\frac{ht}{hx^2}v_{i-1j}^n+&\frac{ht}{hy^2}v_{ij-1}^n+ht\cdot r^nf_{ij}^n
\end{split}
\nonumber
\end{equation}
with the initial condition
\begin{equation}
\varphi_{ij}=v_{ij}^0,
\nonumber
\end{equation}
nonlocal boundary conditions
\begin{equation}
\begin{split}
&v_{0j}^n=v_{Nxj}^n, \qquad \frac{v_{Nxj}^n-v_{Nx-1j}^n}{hx}=0 \Rightarrow v_{Nxj}^n=v_{Nx-1j}^n, \\
&v_{i0}^n=v_{iNy}^n=0, 
\end{split}
\nonumber
\end{equation}
and overdetermination condition
\begin{equation} \label{runi}
r^n=\frac{1}{E^n}\Int_0^1\Int_0^1 v(x,y,t_n)\diff x\diff y,
\end{equation}
where
\begin{equation}
\begin{split}
v_{ij}^n=v(x_i,y_j,t_n), \quad r^n=r(t_n),\quad E^n=E(t_n),\quad f_{ij}^n=f(x_i,y_j,t_n), \quad n=0,\ldots, Nt,\qquad\qquad \\
\quad i=0,\ldots,Nx,~ j=0,\ldots,Ny.
\end{split}
\nonumber
\end{equation}
When we approximate $\Int_0^1\Int_0^1 v(x,y,t)\diff x\diff y$ by the product trapezoidal rule
\begin{equation}
\Int_0^1\Int_0^1 v(x,y,t)\diff x\diff y\approx hx\cdot hy\sum_{i=1}^{Nx}\sum_{j=1}^{Ny}\frac{1}{4}\left(v(x_{i-1},y_{j-1},t)+v(x_{i-1},y_{j},t)+v(x_{i},y_{j-1},t)+v(x_{i},y_{j},t)\right),
\nonumber
\end{equation}
where $t\in[0,T]$. And similarly by the product Simpson's rule
\begin{equation}
\begin{split}
\Int_0^1\Int_0^1 v(x,y,t)\diff x\diff y\approx \frac{hx\cdot hy}{9}\sum_{i=1}^{Nx/2}\sum_{j=1}^{Ny/2}&\left[v(x_{2i-2},y_{2j-2},t)+4v(x_{2i-1},y_{2j-2},t)+v(x_{2i},y_{2j-2},t)\right] \\
&~+4\left[v(x_{2i-2},y_{2j-1},t)+4v(x_{2i-1},y_{2j-1},t)+v(x_{2i},y_{2j-1},t)\right]\\
&~~\\
&~+\left[v(x_{2i-2},y_{2j},t)+4v(x_{2i-1},y_{2j},t)+v(x_{2i},y_{2j},t)\right],
\end{split}
\nonumber
\end{equation}
where $Nx$ and $Ny$ must be even.

\subsection{Non-uniform finite difference method}
Let us consider the same problem on a non-uniform grid.
\begin{equation}
\begin{split}
\frac{v_{ij}^{n+1}-v_{ij}^{n}}{ht}&=2\left(\frac{v_{i-1j}^n}{(x_{i-1}-x_i)(x_{i-1}-x_{i+1})}+\frac{v_{ij}^n}{(x_{i}-x_{i-1})(x_{i}-x_{i+1})}+\frac{v_{i+1j}^n}{(x_{i+1}-x_{i-1})(x_{i+1}-x_{i})}\right) \\
&+2\left(\frac{v_{ij-1}^n}{(y_{j-1}-y_j)(y_{j-1}-y_{j+1})}+\frac{v_{ij}^n}{(y_{j}-y_{j-1})(y_{j}-y_{j+1})}+\frac{v_{ij+1}^n}{(y_{j+1}-y_{j-1})(y_{j+1}-y_{j})}\right)\\
&+r^nf_{ij}^n,\\
\Rightarrow v_{ij}^{n+1}=&\frac{2ht}{(x_{i-1}-x_{i})(x_{i-1}-x_{i+1})}v_{i-1j}^n+\frac{2ht}{(y_{j-1}-y_{j})(y_{j-1}-y_{j+1})}v_{ij-1}^n\qquad\\
&+\left(1+\frac{2ht}{(x_{i}-x_{i-1})(x_{i}-x_{i+1})}+\frac{2ht}{(y_{j}-y_{j-1})(y_{j}-y_{j+1})}\right)v_{ij}^n\\
&+\frac{2ht}{(x_{i+1}-x_{i-1})(x_{i+1}-x_{i})}v_{i+1j}^n+\frac{2ht}{(y_{j+1}-y_{j-1})(y_{j+1}-y_{j})}v_{ij+1}^n+ht\cdot r^nf_{ij}^n,
\end{split}
\nonumber
\end{equation}
with the initial condition
\begin{equation}
\varphi_{ij}=v_{ij}^0,
\nonumber
\end{equation}
nonlocal boundary conditions
\begin{equation}
\begin{split}
&v_{0j}^n=v_{Nxj}^n, \qquad \frac{v_{Nxj}^n-v_{Nx-1j}^n}{x_{Nx}-x_{Nx-1}}=0 \Rightarrow v_{Nxj}^n=v_{Nx-1j}^n, \\
&v_{i0}^n=v_{iNy}^n=0, 
\end{split}
\nonumber
\end{equation}
and overdetermination condition
\begin{equation} \label{rnon}
r^n=\frac{1}{E^n}\Int_0^1\Int_0^1 v(x,y,t_n)\diff x\diff y,
\end{equation}
where
\begin{equation}
\begin{split}
&v_{ij}^n=v(x_i,y_j,t_n), \quad r^n=r(t_n),\quad E^n=E(t_n),\quad f_{ij}^n=f(x_i,y_j,t_n), \quad n=0,\ldots, Nt, \\
&\qquad\qquad\qquad\qquad\qquad\qquad\qquad\qquad\qquad 0=x_0<x_1<\cdots<x_{Nx}=1, ~0=y_0<y_1<\cdots<y_{Ny}=1.
\end{split}
\nonumber
\end{equation}
When we approximate $\Int_0^1\Int_0^1 v(x,y,t)\diff x\diff y$ by the product rules on the square using Gauss-Lobatto nodes which are transformed from the unit square and the weights, 
\begin{equation}
\Int_0^1\Int_0^1 v(x,y,t)\diff x\diff y\approx \frac{1}{4}\sum_{i=0}^{Nx}\sum_{j=0}^{Ny}A_{ij}\cdot v(x_i,y_j,t),
\nonumber
\end{equation}
where $0\leq x_1<\cdots<x_{Nx}\leq 1$ and $0\leq y_1<\cdots<y_{Ny}\leq1$, and $A_{ij}=A_iA_j$ are products of the weights $A_i$ and $A_j$.

Now we consider two examples and see the advantage of the using the product Gauss-Lobatto rules for numerical integration. At first let consider the example which does not satisfy the conditions $(A_1)-(A_3)$.

\begin{example}
Consider the inverse problem \eqref{1}-\eqref{4}, with
\begin{equation}
\begin{split}
&f(x,y,t)=e^{tx(x-1)^2}\left(9y^2t^2x^4-24y^2t^2x^3+22y^2t^2x^2-8y^2t^2x-6ytx+6y^2tx-9yt^2x^4\right.\\
&\qquad\qquad\qquad\qquad\qquad\left.+24yt^2x^3-22yt^2x^2+8yt^2x+yx^3-2yx^2-y^2x^3+2y^2x^2-y^2x+2+yx\right), \\
&\qquad\qquad\qquad\qquad\varphi(x,y)=y(1-y),\qquad E(t)=\frac{1}{6}\int_0^1 e^{tx^3-2tx^2+tx}\diff x,\qquad T=1.
\end{split}
\nonumber
\end{equation}
It is easy to check
\begin{equation}
\{p(t),u(x,y,t)\}=\{t^2-4t,~y(1-y)e^{tx(x-1)^2}\}.
\nonumber
\end{equation}
Then the problem \eqref{v1}-\eqref{v4} will be
\begin{equation}
\begin{split}
v_t=v_{xx}+v_{yy}+r(t)&e^{tx(x-1)^2}\left(9y^2t^2x^4-24y^2t^2x^3+22y^2t^2x^2-8y^2t^2x-6ytx+6y^2tx-9yt^2x^4\right.\\
&\quad~\left.+24yt^2x^3-22yt^2x^2+8yt^2x+yx^3-2yx^2-y^2x^3+2y^2x^2-y^2x+2+yx\right), \\
&\qquad\qquad\qquad\qquad\qquad\qquad\qquad0<x,y<1,\quad 0<t<1, \\
&v(x,y,0)=y(1-y), \quad 0\leq y\leq 1, \\
&v(0,y,t)=v(1,y,t), \quad 0\leq y,t\leq 1, \\
v(x,&0,t)=v(x,1,t)=0, \quad 0\leq x,t \leq 1, \\
&v_x(1,y,t)=0, \quad 0\leq y,t \leq 1, \\
\Int_0^1\Int_0^1 & v(x,y,t)\diff x\diff y=r(t)\frac{1}{6}\int_0^1 e^{tx^3-2tx^2+tx}\diff x,
\end{split}
\nonumber
\end{equation}
where the exact solution of $r(t)$ is
$$r(t)=\exp\left(\frac{t^3}{3}-2t^2\right).$$
We use the explicit finite difference method to solve the problem for finding the values of $u$, and then use \eqref{runi} and Trapezoidal rule and Simpson's rule on uniform grids; and \eqref{rnon} and product rule with Gauss-Lobatto nodes and weights to find $p(t)$ approximately. We get the following results on Table 1 and give the figures (Figure 1 - Figure 6) for $Nx=Ny=26$ and $Nt=2700$ on uniform grid, $Nt=147000$ on non-uniform grid (we choose these distinct $Nt$s' because of the convergence of explicit finite difference method): \\

\begin{table}[ht]
\begin{center}
\begin{tabular}{ c | c | c | c |}
\cline{2-4}
& {Error (Trapezoidal)} & {Error (Simpson's)} & {Error (Non-uniform)} \\
\hline
 \multicolumn{1}{|c|}{$u(x,y,t)$} & 0.0017 & 0.0012 & 1.1715e-04 \\ 
 \hline
  \multicolumn{1}{|c|}{$p(t)$} & 0.0609 & 0.0724 & 0.0059 \\
\hline
\end{tabular}
\caption{Errors of $u(x,y,t)$ for $t=1$ and $p(t)$.}
\end{center}
\end{table}

\begin{figure}[p]
\includegraphics[scale=0.39]{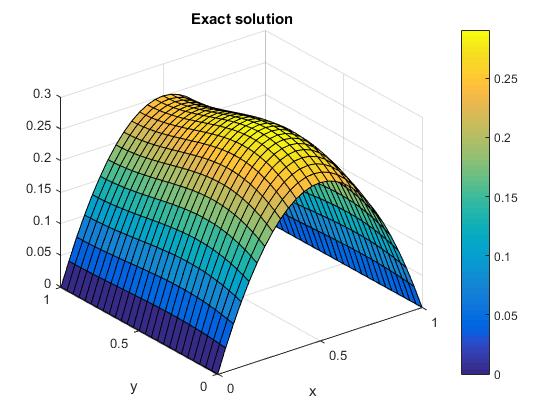}
\includegraphics[scale=0.39]{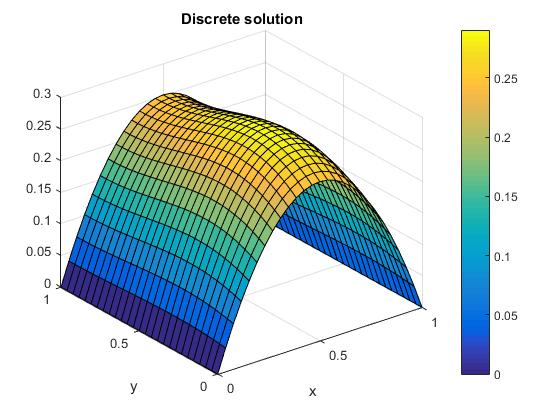}
\caption{Trapezoidal rule, exact and discrete solution of $u(x,y,t)$ for $t=1$.}
\end{figure}

\begin{figure}[p]
\begin{center}
\includegraphics[scale=0.39]{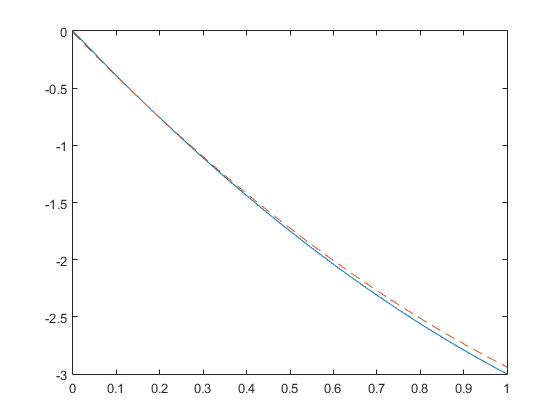}
\caption{Trapezoidal rule, exact and approximate solution of $p(t)$.}
\end{center}
\end{figure}

\begin{figure}[p]
\includegraphics[scale=0.39]{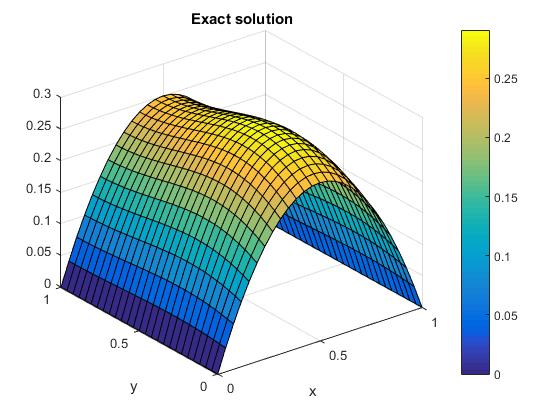}
\includegraphics[scale=0.39]{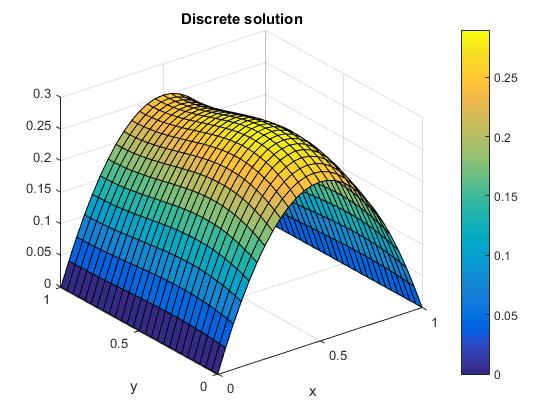}
\caption{Simpson's rule, exact and discrete solution of $u(x,y,t)$ for $t=1$.}
\end{figure}

\begin{figure}[p]
\begin{center}
\includegraphics[scale=0.39]{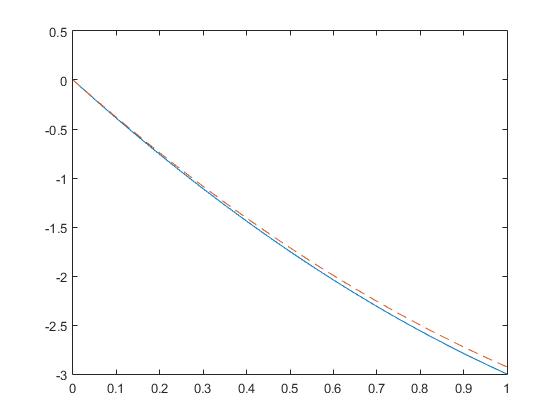}
\caption{Simpson's rule, exact and approximate solution of $p(t)$.}
\end{center}
\end{figure}

\begin{figure}[p]
\includegraphics[scale=0.39]{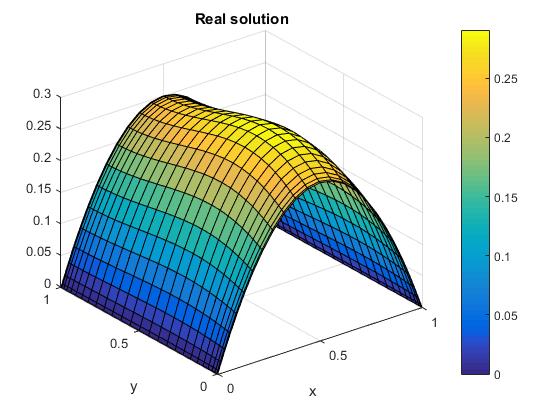}
\includegraphics[scale=0.39]{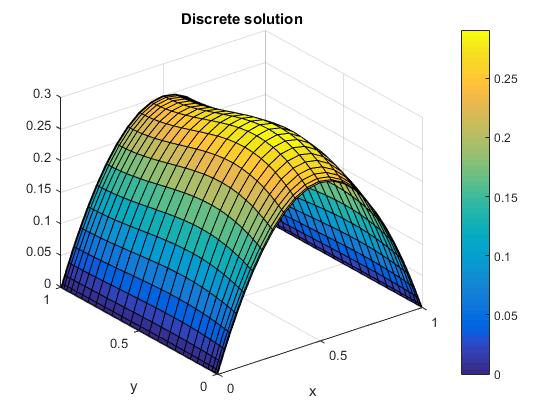}
\caption{Product Gauss-Lobatto rule, exact and discrete solution of $u(x,y,t)$ for $t=1$.}
\end{figure}

\begin{figure}[p]
\begin{center}
\includegraphics[scale=0.39]{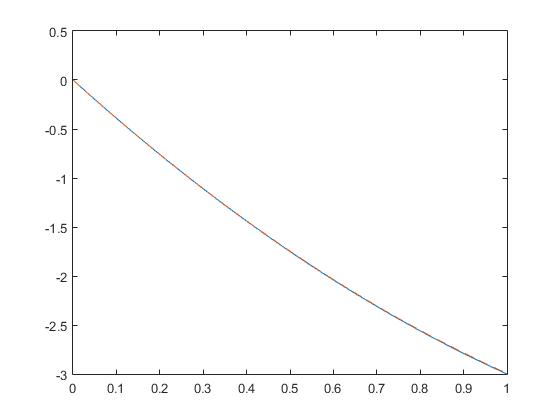}
\caption{Product Gauss-Lobatto rule, exact and approximate solution of $p(t)$.}
\end{center}
\end{figure}
\end{example}

\newpage 
Now let's consider an example which satisfies the conditions $(A_1)-(A_3)$.
\begin{example}
Consider the inverse problem \eqref{1}-\eqref{4}, with
\begin{equation}
\begin{split}
&f(x,y,t)=-\sin(\pi y)e^{-\pi^2t}\sin(\pi x) \left(9\pi^2\cos^2(\pi x) -3\pi^2-e^{\pi^2 t}+e^{\pi^2 t}\cos^2(\pi x)\right), \\
&\qquad\quad \varphi(x,y)=\sin(\pi y)\sin^3(\pi x),\qquad E(t)=\frac{8}{3\pi^2} e^{-\pi^2t},\qquad T=\frac{1}{3}.
\end{split}
\nonumber
\end{equation}
It is easy to check
\begin{equation}
\{p(t),u(x,y,t)\}=\{e^{\pi^2t},~\sin(\pi y)e^{-\pi^2t}\sin^3(\pi x)\}.
\nonumber
\end{equation}
Then the problem \eqref{v1}-\eqref{v4} will be
\begin{equation}
\begin{split}
v_t=v_{xx}+v_{yy}+r(t)&\left(-\sin(\pi y)e^{-\pi^2t}\sin(\pi x) \left(9\pi^2\cos^2(\pi x) -3\pi^2-e^{\pi^2 t}+e^{\pi^2 t}\cos^2(\pi x)\right)\right), \\
&\qquad\qquad\qquad\qquad\qquad\qquad\qquad\qquad0<x,y<1,\quad 0<t<\frac{1}{4}, \\
& v(x,y,0)=\sin(\pi y)\sin^3(\pi x), \quad 0\leq x,y\leq 1, \\
& v(0,y,t)=v(1,y,t), \quad  0\leq y\leq 1,~0\leq t\leq \frac{1}{4}, \\
& v(x,0,t)=v(x,1,t)=0, \quad 0\leq x \leq 1,~0\leq t\leq \frac{1}{4} \\
&\qquad v_x(1,y,t)=0, \quad 0\leq y\leq 1,~0\leq t\leq \frac{1}{4} \\
&\qquad \Int_0^1\Int_0^1  v(x,y,t)\diff x\diff y=r(t)\frac{8}{3\pi^2} e^{-\pi^2t},
\end{split}
\nonumber
\end{equation}
where the exact solution of $r(t)$ is
$$r(t)=\exp\left(\frac{-1+e^{\pi^2t}}{\pi^2}\right).$$
Actually, this example does satisfy the $(A_1)-(A_3)$ conditions with the exceptional $\varphi_{0,1}={\sqrt{2}}/{3\pi}>0$. This value must be non-positive, but we put this condition in order to make $p(t)$ non-negative. Since $p(t)$ is positive even if $\varphi_{0,1}\not\leq 0$ we can call this problem is available to the Theorem \ref{teoexun}. Now, we give the following results on Table 2 and several figures (Figure 7 - Figure 12) again for $Nx=Ny=26$ and $Nt=900$ on uniform grid, $Nt=49000$ on non-uniform grid (we choose these distinct $Nt$s' because of the convergence of explicit finite difference method): \\

\begin{table}[ht]
\begin{center}
\begin{tabular}{ c | c | c | c |}
\cline{2-4}
& {Error (Trapezoidal)} & {Error (Simpson's)} & {Error (Non-uniform)} \\
\hline
 \multicolumn{1}{|c|}{$u(x,y,t)$} & 4.1218e-04 & 3.8541e-04 & 1.9697e-04 \\ 
 \hline
  \multicolumn{1}{|c|}{$p(t)$} & 0.3114 & 0.2113 & 0.0196 \\
\hline
\end{tabular}
\caption{Errors of $u(x,y,t)$ for $t=\frac{1}{3}$ and $p(t)$.}
\end{center}
\end{table}

\begin{figure}[p]
\includegraphics[scale=0.39]{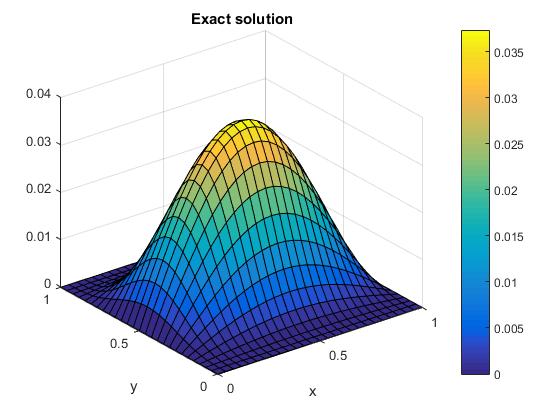}
\includegraphics[scale=0.39]{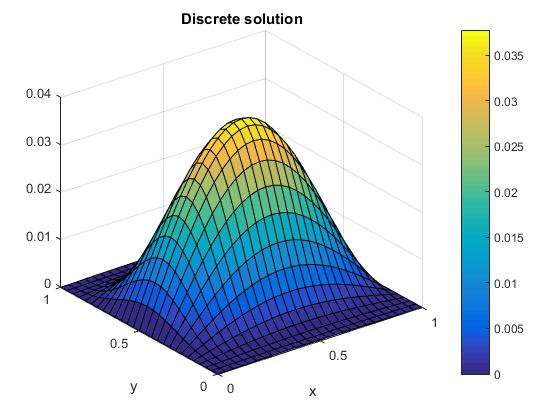}
\caption{Trapezoidal rule, exact and discrete solution of $u(x,y,t)$ for $t=\frac{1}{3}$.}
\end{figure}

\begin{figure}[p]
\begin{center}
\includegraphics[scale=0.39]{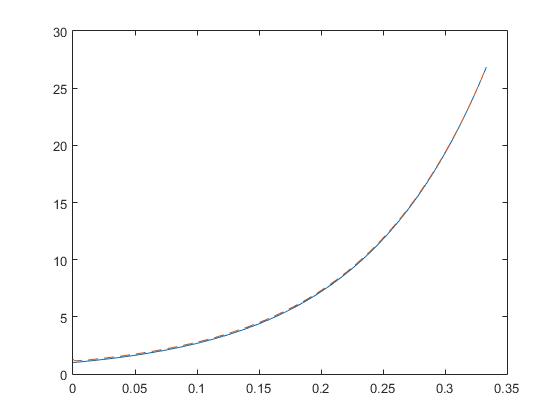}
\caption{Trapezoidal rule, exact and approximate solution of $p(t)$.}
\end{center}
\end{figure}

\begin{figure}[p]
\includegraphics[scale=0.39]{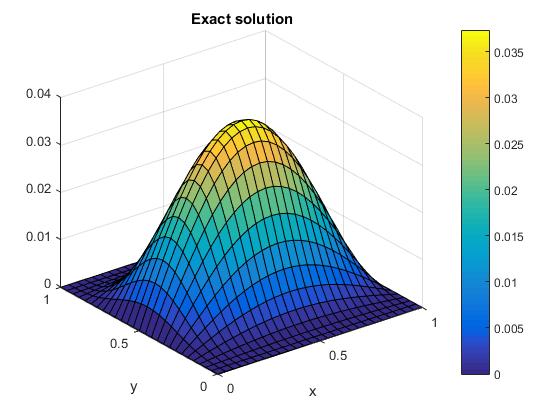}
\includegraphics[scale=0.39]{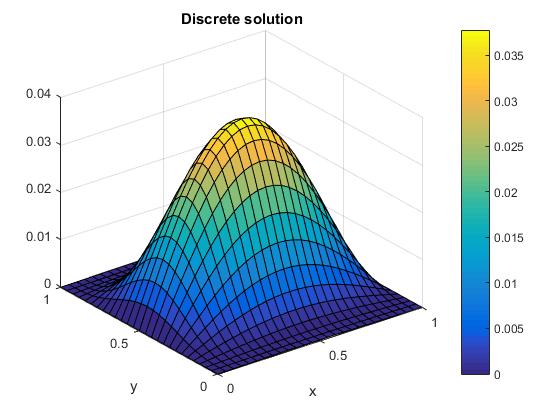}
\caption{Simpson's rule, exact and discrete solution of $u(x,y,t)$ for $t=\frac{1}{3}$.}
\end{figure}

\begin{figure}[p]
\begin{center}
\includegraphics[scale=0.39]{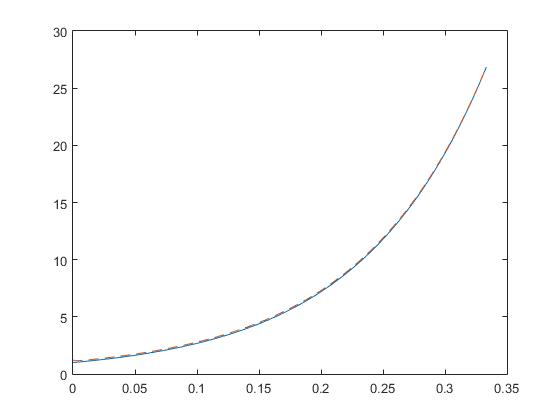}
\caption{Simpson's rule, exact and approximate solution of $p(t)$.}
\end{center}
\end{figure}

\begin{figure}[p]
\includegraphics[scale=0.39]{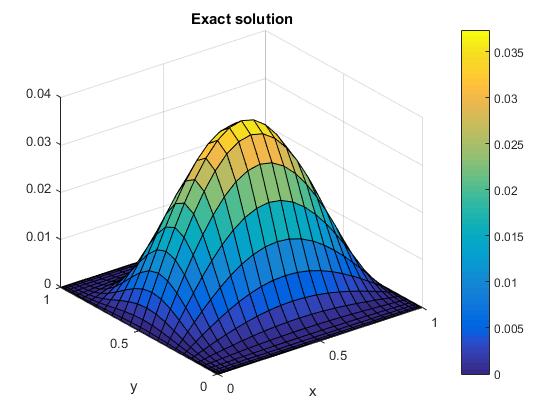}
\includegraphics[scale=0.39]{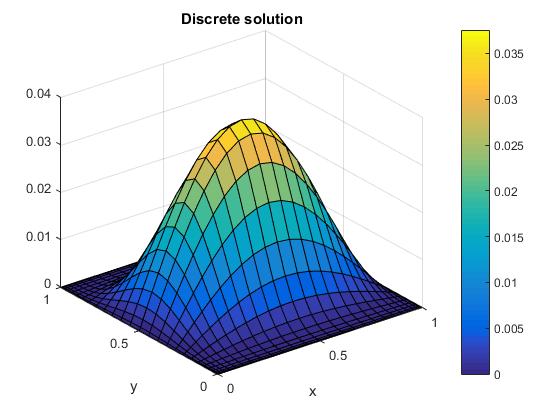}
\caption{Product Gauss-Lobatto rule, exact and discrete solution of $u(x,y,t)$ for $t=\frac{1}{3}$.}
\end{figure}

\begin{figure}[p]
\begin{center}
\includegraphics[scale=0.39]{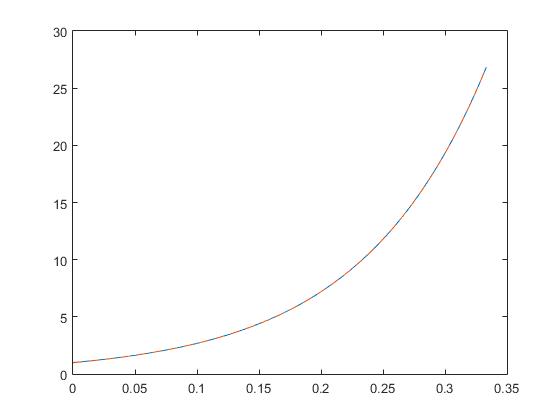}
\caption{Product Gauss-Lobatto rule, exact and approximate solution of $p(t)$.}
\end{center}
\end{figure}

\end{example}

\newpage

\section{Concluding Remarks}
The paper considers the problem of determining the lowest coefficient that depends on time only,  for a two-dimensional parabolic equation with Ionkin type nonlocal boundary condition and the total energy measurement. The existence and uniqueness of the solution of such an inverse problem and well-posedness of this problem are examined by using the method of series expansion in terms of eigenfunctions and associated functions of corresponding spatial differential operator which is nonself-adjoint and hence the system of eigenfunctions is not complete and must be supplemented by associated functions.    The numerical method can be considered as a suitable combination of finite difference scheme and numerical integration. The traditional approach is the method of uniform finite difference of the equation, initial condition and boundary conditions combined with numerical integration uniformly (trapezoidal or Simpson’s) of the integral overdetermination condition. Consequently, the method of non-uniform finite difference is combined by the Gauss-Lobatto nodes and weights for the integration. The numerical experiments show that the proposed method has a better numerical result than uniform finite difference method. Two numerical examples illustrate how to implement the method. Some tables and figures demonstrate that the method is effective for the 2D heat equation.

\newpage

\end{document}